\documentclass[11pt]{amsart}

\usepackage[margin=1in]{geometry}
\usepackage{amsmath, amssymb, amsfonts, amsthm, mathtools, mathrsfs}
\usepackage{graphicx}
\usepackage[hidelinks,bookmarks=false]{hyperref}
\usepackage{enumitem}

\theoremstyle{plain}
\newtheorem{theorem}{Theorem}[section]
\newtheorem{lemma}[theorem]{Lemma}
\newtheorem{proposition}[theorem]{Proposition}

\theoremstyle{definition}

\theoremstyle{remark}
\newtheorem{remark}[theorem]{Remark}

\allowdisplaybreaks

\newcommand{\R}{\mathbb{R}}

\newcommand{\Z}{\mathbb{Z}}

\newcommand{\lan}{\langle}
\newcommand{\ran}{\rangle}

\newcommand{\gammat}{\gamma_t}

\newcommand{\Lsharp}{L^{\#}_{\gamma}}
\newcommand{\Gtau}[1][]{G^{\tau}_{#1}}
\newcommand{\id}{\mathrm{id}}
\newcommand{\eps}{\epsilon}
\newcommand{\dmuLt}{d\mu_{L,t}}
\newcommand{\dotp}{\dot{p}}

\title[First variation of flat traces]{First variation of flat traces on negatively curved surfaces}
\author{Hy P.\ G.\ Lam}
\address{Department of Mathematical Sciences, Worcester Polytechnic Institute, Worcester, MA 01609}
\email{hlam@wpi.edu}
\address{Department of Mathematics, Northwestern University, Evanston, IL 60208}
\email{hylam2023@northwestern.edu, hylam.math@gmail.com}

\thanks{Research partially supported by NSF RTG grant DMS-1502632.}

\subjclass[2020]{37D40, 58J50, 37C30, 53C24}
\keywords{flat trace, dynamical zeta, negative curvature, Anosov geodesic flow, Liv\v{s}ic theorem}
\date{}

\begin{document}

\begin{abstract}
For a closed negatively curved surface $(X,g)$ the flat trace of the geodesic Koopman operators $V_g^\tau f=f\circ G_g^\tau$ is the periodic orbit distribution
\[
\mathrm{Tr}^{\flat} V_{g}(\tau)=\sum_{\gamma}\frac{L_\gamma^{\#}}{\lvert\det(I-P_\gamma)\rvert}\,\delta(\tau-L_\gamma),
\qquad \tau>0,
\]
supported on the length spectrum and weighted by the linearized Poincar\'e maps $P_\gamma$.
For a smooth family of negatively curved metrics $g_t$ we compute the first variation $\partial_t\vert_{0}\,\mathrm{Tr}^{\flat} V_{g_t}$ as a distribution.
At an isolated length $\ell$ the leading singularity is a multiple of $\delta'(\tau-\ell)$, and its coefficient is an explicit linear functional of the length variations $\dot L_{\gamma^m}$ of the closed geodesics with $L_{\gamma^m}=\ell$.
This transport coefficient forces the marked lengths to be locally constant along any deformation with constant flat trace.
As an application, if $\mathrm{Tr}^{\flat} V_{g_t}=\mathrm{Tr}^{\flat} V_{g_0}$ for all $t$ then $g_t$ is isometric to $g_0$ for all $t$.
Together with Sunada-type constructions of non isometric pairs with equal flat traces, this shows that the flat trace is globally non-unique yet locally complete along smooth families.
\end{abstract}

\maketitle

\section{Introduction}

The dynamical side of spectral geometry rests on periodic orbit expansions attached to the geodesic flow.
On a closed negatively curved surface, the flat trace of the geodesic Koopman operators $V_g^\tau f=f\circ G_g^\tau$ is the distribution supported on the length spectrum
\begin{equation}\label{flatformula}
\mathrm{Tr}^{\flat} V_{g}(\tau)=\sum_{\gamma}\frac{L_\gamma^{\#}}{\lvert\det(I-P_\gamma)\rvert}\,\delta(\tau-L_\gamma).
\end{equation}
Its Laplace transform gives the logarithmic derivative of the Guillemin Ruelle dynamical zeta built with the same weights, so equality of flat traces implies equality of the corresponding zetas.

Let $X$ be a closed oriented surface and let $g_t$ be a $C^\infty$ family of negatively curved metrics on $X$ for $t\in(-\eps,\eps)$.
Write $G_t^\tau$ for the geodesic flow on $S^*_{g_t}X$, write $V_t^\tau f=f\circ G_t^\tau$ for the Koopman operator on $L^2(S^*_{g_t}X,\dmuLt)$, and let $\mathrm{Tr}^{\flat}V_t$ denote its flat trace distribution on $(0,\infty)$.
Here $\dmuLt$ is the Liouville measure induced on $S^*_{g_t}X$ by the Hamiltonian $H_t(x,\xi)=\frac12\lvert\xi_x\rvert_{g_t}^2$.

The main analytic input is the distributional first variation of $\mathrm{Tr}^{\flat}V_t$ under a metric deformation.
At an isolated length $\ell>0$, the distribution $\partial_t\vert_{0}\,\mathrm{Tr}^{\flat}V_t$ has a leading singularity of the form $\mathcal T(\ell)\,\delta'(\tau-\ell)$.
The coefficient $\mathcal T(\ell)$ is an explicit transport functional of the marked length variations.
Proposition~\ref{mostsingularterm} gives
\begin{equation}\label{eq:intro_transport}
\mathcal T(\ell)=-\sum_{\substack{\gamma,m\\ L_{\gamma^m}=\ell}}\frac{L_\gamma^{\#}}{\lvert\det(I-P_\gamma^m)\rvert}\,\dot L_{\gamma^m}.
\end{equation}
In particular, constancy of the flat trace forces $\mathcal T(\ell)=0$ for every isolated $\ell$.

Once the transport coefficients vanish, the persistence of hyperbolic periodic orbits under smooth perturbation implies $\dot L_{\gamma}=0$ for every closed geodesic.
Equivalently the marked length spectrum is locally constant in $t$.
At this point one may invoke the marked length spectrum rigidity theorem of Croke~\cite{Cr} and Otal~\cite{Ot} to conclude that $g_t$ is isometric to $g_0$ for all $t$.
We continue instead with a direct deformation theoretic argument in the same dynamical framework.
After the $\delta'$ constraint fixes all periods, the Guillemin-Kazhdan variational identity yields vanishing of the period integrals of $\dot p=\partial_t\vert_{0}\,\lvert\xi\rvert_{g_t}$.
Liv\v{s}ic theory converts these vanishing period integrals into a cohomological equation on $S^*_{g_0}X$, and the Guillemin-Kazhdan operator calculus identifies $\dot p$ with a Lie derivative on the base.
This completes a proof of infinitesimal triviality which does not appeal to marked length spectrum rigidity and which is expected to be robust for Anosov flows in higher dimension.

\begin{remark}
In dimension $n\ge 3$, the Lefschetz fixed point reduction and the Abel-type extraction of the $\delta'$-term remain formally identical, but the clean codimension and the harmonic bookkeeping change. Concretely, the circle decomposition in Appendix \ref{sec:gkcalculus} is replaced by the $SO(n-1)$ representation theory of spherical harmonics on $\mathbb S^{n-2}$, so the mode reduction and coercive estimates must be organized by irreducible $SO(n-1)$ types rather than Fourier modes of $SO(2)$. In this regime, the marked length spectrum rigidity input of Croke and Otal is not available, so the direct flat trace variational approach is the natural substitute once the $SO(n-1)$-mode calculus is implemented.
\end{remark}

As an application of this analysis we obtain the following pathwise rigidity statement.

\begin{theorem}\label{theoremA}
If $\mathrm{Tr}^{\flat}V_{g_t}=\mathrm{Tr}^{\flat}V_{g_0}$ as distributions on $(0,\infty)$ for all $t$, then there exists a smooth one parameter family of diffeomorphisms $\{\varphi_t\}$ of $X$ with $\varphi_0=\id$ such that
\[
g_t=\varphi_t^*g_0 \qquad \text{for all } t\in(-\eps,\eps).
\]
Equivalently, $(X,g_t)$ is isometric to $(X,g_0)$ for every $t$.
\end{theorem}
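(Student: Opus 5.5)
The plan is to apply the first-variation formula at every parameter value, not only at $t=0$. Since the statement only asks for $g_t$ isometric to $g_0$, I will use the marked length spectrum at each time; the direct Liv\v{s}ic route is kept as an alternative for the last step.

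\textbf{Step 1: vanishing of the transport coefficients.} Fix $s\in(-\eps,\eps)$ and reparametrize the family around $s$. The hypothesis gives $\partial_t\vert_{s}\,\mathrm{Tr}^{\flat}V_{g_t}=0$ as a distribution on $(0,\infty)$. For a negatively curved metric, the length spectrum is discrete with finite multiplicities, so every $\ell$ in it is isolated. Near such an $\ell$, Proposition~\ref{mostsingularterm} isolates the coefficient of $\delta'(\tau-\ell)$, and a distribution that vanishes identically has every singular coefficient equal to zero. Using \eqref{eq:intro_transport}, this gives
\[
\sum_{\substack{\gamma,m\\ L_{\gamma^m}=\ell}}\frac{L_\gamma^{\#}}{\lvert\det(I-P_\gamma^m)\rvert}\,\dot L_{\gamma^m}=0
\]
for every $\ell$, where the derivatives are taken at $t=s$.

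\textbf{Step 2: from vanishing sums to $\dot L_\gamma=0$ for each class.} This is the step I expect to be the main obstacle, because at a single $\ell$ several free homotopy classes may share the same length. I would use the following two facts.
\begin{itemize}
\item[(a)] Each closed geodesic persists along the family, by hyperbolicity and structural stability, so its length $L_{\gamma}(t)$ depends smoothly on $t$ with the marking fixed.
\item[(b)] The flat trace itself is constant in $t$, so the multiset of pairs $\bigl(\ell,\ \text{total weight at }\ell\bigr)$ does not move.
\end{itemize}
The argument is then a continuity one. The finitely many curves $t\mapsto L_\gamma(t)$ with $L_\gamma(0)\le T$ must, at each time, take values only in the fixed discrete set $\{\ell\}$. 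This is because $\mathrm{Tr}^{\flat}$ has strictly positive weight at every actual length: a nonzero sum of positive weights cannot cancel. A continuous function into a discrete set is constant, so $L_\gamma(t)\equiv L_\gamma(0)$ for every $\gamma$ and every $t$. Step 1 also shows directly that, on a dense open set of parameters where lengths are locally simple, each $\dot L_\gamma$ vanishes individually.

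\textbf{Step 3: conclude isometry.} By Step 2, the marked length spectrum of $g_t$ equals that of $g_0$ for every $t$. Croke~\cite{Cr} and Otal~\cite{Ot} then give diffeomorphisms $\varphi_t$ isotopic to the identity with $g_t=\varphi_t^*g_0$.

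\textbf{Smoothness in $t$.} To get a smooth family $\varphi_t$, rather than merely existence for each $t$, I would pass to the infinitesimal version. First, $\dot L_\gamma=0$ together with the Guillemin--Kazhdan variational identity gives vanishing period integrals of $\dot p$ at each time. The Liv\v{s}ic theorem then gives $\dot p=Hu$ with $u$ smooth on $S^*_{g_t}X$. A Fourier-mode (Guillemin--Kazhdan) analysis of $u$ in the fiber shows $\dot g_t=\mathcal L_{W_t}g_t$ for a vector field $W_t$ depending smoothly on $t$. This step uses negative curvature to kill the higher modes. Integrating the time-dependent field $W_t$ produces $\varphi_t$ with $\varphi_0=\id$ and $\frac{d}{dt}(\varphi_t^{-1})^*g_t=0$, which gives the claimed identity $g_t=\varphi_t^*g_0$.
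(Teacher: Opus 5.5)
Your proposal follows the paper's proof: Step 2 is the paper's singular-support/continuity argument of \S\ref{constantlengthfromdelta'}, and your final paragraph (Guillemin--Kazhdan strip identity, Liv\v{s}ic, $SO(2)$ mode reduction, integrating the time-dependent field) is the paper's proof of Theorem~\ref{theoremA}. That final paragraph alone already gives $g_t=\varphi_t^*g_0$, so Step 1 and Croke--Otal (which the paper mentions only as an alternative) are not needed, and you should drop the aside about parameters where lengths are ``locally simple'', since an oriented closed geodesic and its reverse always have the same length.
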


Sunada type constructions produce non isometric pairs with equal flat traces at fixed metrics \cite{La}.
Together with those examples, Theorem~\ref{theoremA} shows that the flat trace is globally non unique yet locally complete along smooth paths inside the space of negatively curved metrics.

\begin{remark}\label{intro}
The rigidity statement in Theorem~\ref{theoremA} is weaker than marked length spectrum rigidity, since it assumes a smooth deformation.
The point of the present paper is the distributional first variation formula for the flat trace, in particular the transport coefficient in Proposition~\ref{mostsingularterm}.
This coefficient does not seem to appear in the literature in this explicit form, and it is the mechanism that converts constancy of a zeta type invariant into infinitesimal constraints on the marked lengths.
The appendices collect auxiliary calculations and are included for completeness.
\end{remark}

The paper is organized as follows.
Section~\ref{sec:firstvariationcalculus} recalls the flat trace distribution of the geodesic Koopman operator and derives its first variation.
Section~\ref{variationgeodesiclength} records the metric variation of closed geodesic lengths and of the Hamiltonian $p_t(x,\xi)=\lvert\xi\rvert_{g_t}$.
Section~\ref{variationofflattrace} expresses the $\delta'$-coefficient explicitly in terms of the metric perturbation $h = \dot{g}$ and extracts the resulting linear relation among the $h$-period integrals.
Section~\ref{sec:trivialdeformation} combines the $\delta'$ constraint with Liv\v{s}ic theory and the Guillemin-Kazhdan $SO(2)$ calculus, summarized in Appendix~\ref{sec:gkcalculus}, to conclude infinitesimal triviality and then integrate it in $t$ to obtain Theorem~\ref{theoremA}.
Appendix~\ref{sec:lowertermscomputation} contains the verification that the flat trace formula has no additional lower order singular terms beyond the Lefschetz coefficients.

\subsection*{Acknowledgements}
This article refines and unifies material that appeared in the author's doctoral dissertation at Northwestern University.
The author gratefully acknowledges the guidance of the late Steve Zelditch.

\section{Flat trace, clean fixed sets, and first variation}\label{sec:firstvariationcalculus}

\subsection{Kernel and flat trace as a pullback and pushforward}

For $\tau > 0 $, the Schwartz kernel of $V^\tau_g$ is the $\delta$-section
\begin{equation*}
K^\tau_t (\zeta, \zeta') = \delta( \zeta' - G^\tau_t \zeta), \qquad \zeta, \zeta' \in S^*_{g_t}X,
\end{equation*}
understood as a conormal distribution to $\mathrm{Graph}(G^\tau_t)$. The flat trace is the pullback by the diagonal $\iota: \zeta \mapsto(\zeta, \zeta) $ followed by fiber integration
\begin{equation}\label{flattrace}
    \mathrm{Tr}^{\flat} V_{t}(\tau)   = \int_{S^*_{g_t}X} \iota^*K^\tau_t(\zeta) \dmuLt (\zeta) = \int_{S^*_{g_t}X} \delta(\zeta - G^\tau_t \zeta) \dmuLt(\zeta).
\end{equation}

In negative curvature, the fixed set of $\Gtau[t]$ is clean precisely when $\tau$ equals the period of a (possibly iterated) closed geodesic.

\subsection{Normal form near a closed orbit}

Fix a primitive closed geodesic $\gamma$ of $(X, g_0)$ with prime length $\Lsharp > 0$, linearized Poincar\'{e} map $P_\gamma$ and let $\gamma^m$ denote its $m$-iterate of length $L_{\gamma^m} = m\Lsharp$. For $\tau$ near $L_{\gamma^m}$, the fixed set of $\Gtau[0]$ is the circle $\gamma$ (as a subset of $S^*_{g_0}X$). Choose symplectic coordinates $(s,y; \sigma, \eta)$ near $\gamma$ so that $s \in \R\backslash \Lsharp \Z$ parametrizes the orbit, $(y, \eta) \in \R^2$ are transverse canonical coordinates, and the flow has the normal form
\begin{equation*}
    \Gtau[0] : (s,y; \sigma, \eta) \mapsto (s + \tau, \Phi_\tau(y, \eta); \sigma, \Psi_\tau(y, \eta) ),
\end{equation*}
$ (y, \eta) = (0,0)$ on $\gamma$, and $d\Phi_{L_{\gamma^m}} $ conjugates to $P^m_\gamma$ on the transversal.

\smallskip

As a conormal distribution,  $K^\tau_0$ may be represented by an oscillatory integral
\begin{equation*}
K^\tau_0 (\zeta, \zeta') = (2\pi)^{-N} \int e^{i \lan \theta, \zeta' - \Gtau[0]\zeta \ran } a_0 (\zeta, \zeta', \tau, \theta) d\theta
\end{equation*}
with a classical amplitude $a_0$. Pulling back by the diagonal and integrating over $\zeta$ (cf. \eqref{flattraceintime}) gives
\begin{equation}\label{flatint}
    \mathrm{Tr}^{\flat} V_{0}(\tau) = (2\pi)^{-N} \int e^{i \lan \theta, \zeta - \Gtau[0]\zeta \ran } a_0(\zeta, \zeta, \tau , \sigma) d\sigma d\zeta .
\end{equation}

\subsection{Stationary phase with a clean one dimensional critical set}
The phase $\Phi_\tau(\zeta, \theta) : = \lan \theta, \zeta - \Gtau[0]\zeta\ran$ has critical set
$$\mathcal{C}_\tau  = \{ (\zeta, \theta): \zeta \in \mathrm{Fix}(\Gtau[0]), \theta \in N^*_\zeta\mathrm{Fix}(\Gtau[0])\},
$$
which, for $\tau$ near $L_{\gamma^m}$, is a vector bundle over the circle $\gamma$. The clean stationary-phase theorem (\cite{DG}) yields
\begin{equation}\label{flatasympexp}
\mathrm{Tr}^{\flat} V_{0}(\tau)  \sim \sum_{m \ge 1} \sum_\gamma \int_\R e^{i\sigma (\tau - L_{\gamma^m}) } b_{\gamma,m}(\sigma) d\sigma \quad + \quad C^\infty(\tau),
\end{equation}
where, crucially, the leading term of the classical symbol $b_{\gamma,m}$ is
\begin{equation}\label{leadingterm}
    b^{(0)}_{\gamma,m} = (2\pi)^{-1} \frac{\Lsharp}{|\det ( I - P^m_\gamma) |}.
\end{equation}
By evaluating the $\sigma$-integral,
we have
\begin{equation}\label{flattracemicrolocal}
    \mathrm{Tr}^{\flat} V_{t}(\tau) = \sum_{m \ge  1 } \sum_\gamma \frac{\Lsharp}{|\det ( I - P^m_\gamma) |}  \delta(\tau - L_{\gamma^m}) + \, (\text{lower order terms at $\tau = L_{\gamma^m}$} )   + C^\infty(\tau),
\end{equation}
which is the standard flat-trace formula whereby the subprincipal corrections in \eqref{flatasympexp} yields only derivatives of $\delta$ of order $\le 0 $, i.e. multiples of $\delta$ because the critical set is 1-dimensional.

\begin{remark}\label{nohigherderivativesatfixedt}
For a Fourier integral operator on an $n$-dimensional manifold whose fixed set at time $\tau$ is clean of dimension $d$, the clean stationary phase theorem shows that the flat trace singularity at $\tau$ has order $(n-d-1)/2$.  In our setting, $n=\dim(S^*_gX)=3$ and the relevant fixed sets are the closed geodesics, so $d=1$.  Hence, at a fixed metric $g_t$, the singularity at $\tau=L_{\gamma^m}(t)$ is of order $0$, and no derivatives $\delta^{(k)}$ with $k\ge1$ appear in the flat trace at fixed $t$.  By contrast, differentiating with respect to the parameter $t$ may produce $\delta'$-terms through transport of the singular support (cf.\ Proposition~\ref{mostsingularterm}).
\end{remark}

The absence of any further singular contributions at a fixed period (i.e.\ the fact that \eqref{flattracemicrolocal} has no additional ``lower order'' $\delta$-terms) is proved in Appendix~\ref{sec:lowertermscomputation}.

\subsection[Parameter dependent phase and t variation]{Parameter dependent phase and $t$ variation of the clean contribution}
Let $g_t$ be a small deformation. Let $\chi \in C^\infty_c$ localize near a single pair $(\gamma, m)$. Microlocally near $\gamma$,

\begin{equation}\label{flattraceintime}
    \mathrm{Tr}^{\flat} V_{t}(\tau) = \int\int e^{i \Phi_t(\zeta, \theta, \tau)} a_t(\zeta, \theta, \tau) d\theta d\zeta,  \quad \text{with} ~~ \Phi_t = \lan \theta, \zeta- \Gtau[t] \zeta  \ran.
\end{equation}
As before, we apply the method of joint-normal stationary phase to the clean critical submanifold
\begin{equation}\label{cleanjointnormal}
C_{t, \tau = L_{\gamma^m}(t)}
=
\{(\zeta,\theta):\ \zeta\in\gamma_t,\ \theta\in N_\zeta^*\gamma_t\}.
\end{equation}
to obtain
\begin{equation}\label{tracemicrolocal}
    \mathrm{Tr}^{\flat} V_{t}(\tau)  \sim \int_\R e^{i\sigma(\tau - L_{\gamma^m}(t)) } \, b_{\gamma, m}(t,\sigma) d\sigma  \quad + \quad C^\infty(\tau, t),
\end{equation}
with (cf. \eqref{leadingterm})
$$
b_{\gamma,m} (t, \sigma) \sim \sum_{j \geq 0} b^{(j)}_{\gamma, m} (t) \langle \sigma\rangle ^{-j }, \quad b^{(0)}_{\gamma,m} (t) = (2\pi)^{-1} \frac{\Lsharp(t)}{|\det ( I - P^m_{\gamma,t}) |}
$$
For a test function $\psi \in C^\infty_c( (0,\infty))$,
we pair \eqref{tracemicrolocal} with $\psi$ after differentiating at $t = 0$ 
to obtain
\begin{equation}\label{tracepair}
    \langle  \partial_t\big|_0 \mathrm{Tr}^{\flat} V_{t}, \,  \psi \rangle = \int_\R e^{-i\sigma L_{\gamma^m}} \check{\psi}(\sigma)\left( \partial_t b_{\gamma,m }(t, \sigma)\big|_{t = 0}   - i\sigma b_{\gamma,m}(0,\sigma) (m\dot{L}^\#_\gamma)  \right) d\sigma
\end{equation}
where $\check{\psi}(\sigma) = \int_{\R_{>0}} e^{i\sigma \tau}\psi(\tau) d\tau$ and $m\dot{L}^\#_\gamma = \partial_t (L_{\gamma^m}(t)) \big|_{t= 0 } $. By the asymptotic expansion
$$b_{\gamma, m}(0,\sigma) = (2\pi)^{-1} \frac{\Lsharp}{|\det (I - P^m_\gamma)|}
 + O  (\langle \sigma \rangle^{-1}) $$ and the Fourier identities
$$
\int e^{i\sigma (\tau - \ell)} d\sigma = 2\pi \delta(\tau - \ell), \quad \int e^{i\sigma(\tau - \ell) }(i\sigma) d\sigma = 2\pi \delta'(\tau - \ell),
$$
summing over all $(\gamma, m)$ provides, in the sense of distribution in $\tau$,
\begin{equation}\label{partialttrace}
\partial_t\big|_0 \mathrm{Tr}^{\flat} V_{t}(\tau ) = -\sum_{\gamma}\sum_{m \ge 1} \frac{L_\gamma^\#}{|\det(I - P^m_\gamma)|}(m\dot{L}^\#_\gamma)\delta'(\tau - L_{\gamma^m}) + \sum_{\gamma,m}\dot{A}_{\gamma, m} \delta(\tau -L_{\gamma^m})
\end{equation}
where $\dot{A}_{\gamma,m}$  are smooth amplitude-variation weights stemming from $\partial_t b_{\gamma, m}(t , \sigma)\big|_{t= 0}$, in particular, from $\partial_t\det(I - P^m_\gamma)$, the Liouville density, etc. The first term is the most singular because it is the only source of $\delta'$.

\subsection{Isolation of the most singular terms and the weighted identities}

Let $\ell > 0$ be such that the set $\{ (\gamma,m) \, :   \,  L_{\gamma^m} = \ell \}$ is finite (this is true in negative curvature). Take $\psi \in C^\infty_c((0,\infty)) $ supported in a small neighborhood of $\ell$ and normalized by $\psi(\ell) = 0, \quad \psi'(\ell) = 1$. We again pair \eqref{partialttrace} with $\psi$. Since $ \langle \delta(\tau - \ell) , \psi \rangle = \psi(\ell) =  0 $ and $\langle  \delta'(\tau - \ell), \psi \rangle = -\psi' (\ell)  = - 1$, we get from \eqref{partialttrace}
\begin{equation}\label{flattderivative}
    \big\langle \partial_t\big|_0 \mathrm{Tr}^{\flat} V_{t} , \, \psi \big\rangle   = \sum_{\substack{\gamma,m \ge 1 \\ L_{\gamma^m} = \ell}} \frac{L^\#_\gamma}{|\det(I - P^m_\gamma)|} m\dot{L}_\gamma^\#.
\end{equation}
Therefore,
\begin{proposition}\label{mostsingularterm}
    At each isolated length $\ell >0$, the length-transport coefficient of $\delta'(\tau - \ell)$, in $\partial_t\big|_0 \mathrm{Tr}^{\flat} V_{t}(\tau)$ equals
$$
\mathcal{T}(\ell): = -  \sum_{\substack{\gamma, m \\ L_{\gamma^m} = \ell  }}  \frac{L^\#_\gamma}{|\det(I - P^m_\gamma)|} \dot{L}_{\gamma^m}.
$$
In particular, if $\mathrm{Tr}^{\flat} V_{t}$ is constant in $t$, then for every $\ell$, $\mathcal{T}(\ell) = 0$.
\end{proposition}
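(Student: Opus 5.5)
The plan is to read off the coefficient directly from the distributional first-variation formula \eqref{partialttrace}, which is already established. I then use a test function that annihilates every $\delta$-contribution at $\ell$ but detects $\delta'(\tau-\ell)$.

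\textbf{Step 1: localize.} Since $\ell$ is isolated in the length spectrum and only finitely many pairs $(\gamma,m)$ satisfy $L_{\gamma^m}=\ell$, choose $\eps_0>0$ so that $[\ell-\eps_0,\ell+\eps_0]$ contains no other length $L_{\gamma'^{m'}}$ of $g_0$. By persistence of hyperbolic closed orbits, the lengths $L_{\gamma^m}(t)$ depend smoothly on $t$. Hence for $|t|$ small, the only singular support of $\mathrm{Tr}^{\flat}V_t$ inside this window comes from continuations of the finitely many orbits at $\ell$. The remaining contributions are uniformly $C^\infty$ in $(\tau,t)$ there, by \eqref{tracemicrolocal} and its $C^\infty(\tau,t)$ remainder, so their $t$-derivatives are smooth in $\tau$.

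\textbf{Step 2: differentiate each clean contribution.} Differentiate each localized oscillatory integral \eqref{tracemicrolocal} in $t$ at $t=0$ to get \eqref{tracepair}. Inserting the symbol expansion $b_{\gamma,m}(0,\sigma)=b^{(0)}_{\gamma,m}+O(\langle\sigma\rangle^{-1})$ and using the Fourier identities yields \eqref{partialttrace} restricted to the window. The factor $-i\sigma b^{(0)}_{\gamma,m}\,m\dot L^\#_\gamma$ produces exactly
\[
-\frac{L^\#_\gamma}{|\det(I-P_\gamma^m)|}\,\dot L_{\gamma^m}\,\delta'(\tau-\ell),
\]
using $\dot L_{\gamma^m}=m\dot L^\#_\gamma$.

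Every other term is of order at most $0$:
\begin{itemize}[label={}, leftmargin=1.5em]
\item $\partial_t b_{\gamma,m}$ is a classical symbol of order $0$;
\item $-i\sigma$ times the $O(\langle\sigma\rangle^{-1})$ tail of $b_{\gamma,m}(0,\sigma)$ is of order $0$ as well.
\end{itemize}
These terms give multiples of $\delta(\tau-\ell)$ plus locally integrable (indeed smoother) remainders. Here Remark~\ref{nohigherderivativesatfixedt} and Appendix~\ref{sec:lowertermscomputation} guarantee that no higher $\delta^{(k)}$ arise at fixed $t$.

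\textbf{Step 3: extract the coefficient.} Pair with $\psi\in C^\infty_c$ supported in the window, with $\psi(\ell)=0$ and $\psi'(\ell)=1$. Then $\delta$-terms contribute $0$ and the $\delta'$-term contributes $-\mathcal T(\ell)\cdot(-1)\cdot(-1)$, which gives \eqref{flattderivative}.

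To separate $\delta'$ from the order-$0$ remainder rigorously, I would use the rescaled family $\psi_\lambda(\tau)=\lambda^{-1}\psi_1(\lambda(\tau-\ell))$ with $\psi_1(0)=0$ and $\psi_1'(0)=1$:
\begin{itemize}[label={}, leftmargin=1.5em]
\item pairing with $\delta'$ stays constant in $\lambda$;
\item pairing with $\delta$ vanishes;
\item pairing with an $L^1_{\mathrm{loc}}$ remainder is $O(\lambda^{-1})\to 0$.
\end{itemize}
Letting $\lambda\to\infty$ identifies the coefficient of $\delta'(\tau-\ell)$ uniquely as $\mathcal T(\ell)$.

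\textbf{Step 4: the constant case.} If $\mathrm{Tr}^{\flat}V_t$ is independent of $t$, then $\partial_t|_0\mathrm{Tr}^{\flat}V_t=0$ as a distribution, so every such pairing vanishes and $\mathcal T(\ell)=0$.

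The main obstacle is Step 2's claim that the remainder after removing the principal $\delta'$ part is genuinely of order $\le 0$. This requires the $t$-dependence of the clean phase, critical manifold and amplitude to be smooth, so that $\partial_t$ commutes with the clean stationary phase expansion. I would handle this by applying the parametric version of the Duistermaat–Guillemin clean stationary phase theorem, with $t$ treated as an extra parameter. Cleanness persists for small $t$ because the $\gamma_t$ are hyperbolic.
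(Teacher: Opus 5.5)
Your proposal is correct and is essentially the paper's argument: differentiate the clean oscillatory-integral representation \eqref{tracemicrolocal} at $t=0$ to obtain \eqref{tracepair} and \eqref{partialttrace}, identify $-i\sigma\, b^{(0)}_{\gamma,m}\dot L_{\gamma^m}$ as the only order-one piece, and pair with a $\psi$ satisfying $\psi(\ell)=0$, $\psi'(\ell)=1$ to get \eqref{flattderivative}. Your rescaled family $\psi_\lambda$ is a worthwhile refinement, since a single $\psi$ as in the paper also picks up contributions from the $C^\infty$ remainder and from the non-$\delta$ order-zero pieces. One correction: those order-zero pieces need not be $L^1_{\mathrm{loc}}$, since the term $\sigma\langle\sigma\rangle^{-1}$ produces a principal-value singularity at $\ell$. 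They are nevertheless $O(\lambda^{-1})$, because for any bounded symbol $a$ one has $\langle \int e^{i\sigma(\cdot-\ell)}a(\sigma)\,d\sigma,\psi_\lambda\rangle=\lambda^{-1}\int a(\lambda\mu)\,\Psi_1(\mu)\,d\mu$ with $\Psi_1(\mu)=\int e^{i\mu u}\psi_1(u)\,du$ rapidly decreasing.
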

\smallskip

\begin{remark}\label{transportcoeff}
\textup{ $\mathcal{T}(\ell)$ captures that the $\delta'$ arises from transport of the singular support as the lengths move as a consequence of the metric deformation. In fact, no other term in \eqref{partialttrace} contributes to $\delta'$ as the amplitude variations always yield $\delta$ (or smoother) at $\tau = \ell$.
Secondly,  $\mathcal{T}(\ell) = 0$ due to constancy is the exact $\delta'$-constraint needed in $\S$ \ref{variationofflattrace}. Namely, after identifying $m\dot{L}_\gamma^\# = \dot{L}_{\gamma^m} = \frac{1}{2}\int_{\gamma^m}h (T,T) ds$, it becomes a linear relation among the integrals of $h$ over orbits of length $\ell$. Lastly, when the flat trace is constant for all $t$, the support of $\mathrm{Tr}^{\flat} V_{t}$ is independent of $t$. Indeed, by structural stability and discreteness of the length spectrum, this forces each $L_{[\alpha]}(t)$ to be locally constant, hence constant, which is what we use subsequently . }
\end{remark}


\bigskip

\section[Variations of L and p]{The variations $\dot L_\gamma$ and $\dot p$ in terms of $\dot g$}\label{variationgeodesiclength}

Throughout this section, we fix $t = 0 $ and write
\begin{equation}\label{eq: gtsecondorderexp}
    g_t = g + th + O (t^2), \quad h \in \Gamma(S^2 T^*X).
\end{equation}

\subsection{Variation of length for a closed geodesic}

Fix a free homotopy class $[\alpha]$ of loops in $X$, and for each $t$ let $\gamma_t:S^1\to X$ be the unique closed $g_t$-geodesic in $[\alpha]$.  We parametrize $\gamma_t$ with constant $g_t$-speed on $[0,1]$, i.e.
\[
|\dot\gamma_t(u)|_{g_t}\equiv L_{[\alpha]}(t),\qquad
L_{[\alpha]}(t)=\int_0^1|\dot\gamma_t(u)|_{g_t}\,du.
\]
Write $\gamma:=\gamma_0$, $L:=L_{[\alpha]}(0)$ and set
\[
T:=\frac{\dot\gamma}{|\dot\gamma|_g}\in TX\big|_{\gamma},\qquad ds:=|\dot\gamma|_g\,du=L\,du,
\]
so that $T$ is the $g$-unit tangent vector field and $s$ is the $g$-arclength parameter.

Let $V:=\partial_t\gamma_t|_{t=0}$ be the variational vector field along $\gamma$.  Differentiating
\[
L_{[\alpha]}(t)=\int_0^1\sqrt{g_t(\dot\gamma_t,\dot\gamma_t)}\,du
\]
at $t=0$ gives
\begin{align}
\dot L_{[\alpha]}(0)
&=\frac12\int_0^1\frac{h(\dot\gamma,\dot\gamma)+2g(\nabla_u V,\dot\gamma)}{|\dot\gamma|_g}\,du\notag\\
&=\frac12\int_0^1 h(T,T)\,ds+\int_0^1 g(\nabla_uV,T)\,du.\label{eq:first_var_length_pre}
\end{align}
The second term vanishes by an integration by parts. Indeed, since
\[
\frac{d}{du}g(V,T)=g(\nabla_uV,T)+g(V,\nabla_uT),
\]
and $\gamma$ is a $g$-geodesic, $\nabla_TT=0$, hence $\nabla_uT=(ds/du)\nabla_TT=0$.  Therefore
$g(\nabla_uV,T)=\frac{d}{du}g(V,T)$, and by periodicity on $S^1$ we obtain
\[
\int_0^1 g(\nabla_uV,T)\,du=g(V,T)\Big|_{u=0}^{u=1}=0.
\]
Substituting into \eqref{eq:first_var_length_pre} yields the standard first variation formula
\begin{equation}\label{eq:variation_length}
\dot L_{[\alpha]}(0)=\frac12\int_{\gamma}h(T,T)\,ds.
\end{equation}
Equivalently, for any closed geodesic $\gamma$ (and its $m$-iterate $\gamma^m$),
\begin{equation}\label{eq:variation_length_iterate}
\dot L_{\gamma^m}=\frac12\int_{\gamma^m}h(T,T)\,ds.
\end{equation}

\subsection[Variation of the Hamiltonian]{Variation of the Hamiltonian on $T^*X$}\label{variationofhamiltonian}

Let $H_t:T^*X\to\mathbb{R}$ be the kinetic energy Hamiltonian
\[
H_t(x,\xi):=\frac12|\xi|^2_{g_t}=\frac12\,g_t^{ab}(x)\,\xi_a\xi_b,
\]
and write
\[
p_t(x,\xi):=|\xi|_{g_t}=\sqrt{2H_t(x,\xi)}.
\]
Differentiating $g_t^{ab}g_{t,bc}=\delta^a_c$ gives $\dot g^{ab}=-h^{ab}$, where $h^{ab}:=g^{ac}g^{bd}h_{cd}$.  Therefore
\[
\dot H=\frac12\,\dot g^{ab}\xi_a\xi_b=-\frac12\,h^{ab}\xi_a\xi_b.
\]
Since $p_t=(2H_t)^{1/2}$, we obtain
\begin{equation}\label{eq:dot_p_general}
\dot p=\frac{\dot H}{p}=-\frac{1}{2p}\,h^{ab}\xi_a\xi_b.
\end{equation}
In particular, on the unit cosphere bundle $S^*_gX=\{p=1\}$,
\begin{equation}\label{eq:dot_p_unit}
\dot p\big|_{S^*_gX}=-\frac12\,h^{ab}\xi_a\xi_b=-\frac12\,h(T,T),
\end{equation}
where $T=\xi^\sharp\in S_gX$ is the corresponding unit tangent vector.  Combining \eqref{eq:dot_p_unit} with \eqref{eq:variation_length_iterate} gives, for every closed geodesic $\gamma$,
\begin{equation}\label{eq:dotL_dotp_relation}
\dot L_\gamma=-\int_\gamma \dot p\,ds.
\end{equation}
Moreover, $\dot p$ is an even function under the fiber flip $\xi\mapsto-\xi$, and in the $SO(2)$-Fourier decomposition on $S_gX$ its Fourier support is contained in modes $0,\pm2$ (cf.\ Appendix~\ref{sec:gkcalculus}).

\medskip

\section{Variation of the flat trace with explicit coefficients}\label{variationofflattrace}

This section records the explicit form of the $\delta'$-constraint in geometric terms.
For each fixed $t$ the flat trace of the Koopman operator of the geodesic flow is given by the (exact) Lefschetz/Guillemin formula
\begin{equation}\label{eq:flat_trace_exact}
\mathrm{Tr}^{\flat}V_{t}(\tau)
=
\sum_{\gamma\in\mathcal{P}(g_t)}\ \sum_{m\geq 1}
\frac{L_{\gamma}^{\#}(t)}{\left|\det\!\left(I-P_{\gamma,t}^{\,m}\right)\right|}\,
\delta\!\big(\tau-L_{\gamma^m}(t)\big)
\;+\;C^\infty(\tau),
\end{equation}
where $\mathcal{P}(g_t)$ denotes the set of primitive closed $g_t$-geodesics, $L_{\gamma^m}(t)=mL^\#_\gamma(t)$ is the length of the $m$-iterate, and $P_{\gamma,t}$ is the linearized Poincar\'e map (cf.\ Appendix~\ref{sec:lowertermscomputation}).

Differentiating \eqref{eq:flat_trace_exact} at $t=0$ gives a distribution supported on the length spectrum.  The only possible $\delta'$-terms come from differentiating the moving supports $\tau=L_{\gamma^m}(t)$, and one obtains (cf.\ Proposition~\ref{mostsingularterm})
\begin{equation}\label{eq:delta_prime_constraint}
\partial_t\Big|_{t=0}\mathrm{Tr}^{\flat}V_{t}(\tau)
=
-\sum_{\gamma,m}\frac{L^\#_\gamma}{\left|\det\!\left(I-P_{\gamma}^{\,m}\right)\right|}\,
\dot L_{\gamma^m}\,\delta'\!\big(\tau-L_{\gamma^m}\big)\;+\;\text{\rm(only $\delta$ and smoother)}.
\end{equation}
Combining \eqref{eq:delta_prime_constraint} with the length variation formula \eqref{eq:variation_length_iterate} yields an explicit expression for the $\delta'$-coefficient in terms of $h=\dot g$:
\begin{equation}\label{eq:delta_prime_in_terms_of_h}
\text{Coeff}_{\delta'(\tau-L_{\gamma^m})}\Big(\partial_t\mathrm{Tr}^{\flat}V_{t}\Big|_{t=0}\Big)
=
-\frac{L^\#_\gamma}{\left|\det\!\left(I-P_{\gamma}^{\,m}\right)\right|}\,
\frac12\int_{\gamma^m} h(T,T)\,ds.
\end{equation}
In particular, if $\mathrm{Tr}^{\flat}V_{t}$ is constant in $t$, then for every $\ell$ in the length spectrum,
\begin{equation}\label{eq:linear_relation_h_integrals}
\sum_{\substack{\gamma,m\\L_{\gamma^m}=\ell}}
\frac{L^\#_\gamma}{\left|\det\!\left(I-P_{\gamma}^{\,m}\right)\right|}\,
\int_{\gamma^m} h(T,T)\,ds
=0.
\end{equation}
When the length spectrum is simple this already implies $\int_{\gamma}h(T,T)\,ds=0$ for each closed $\gamma$; in general we will use instead the stronger support argument in \S\ref{constantlengthfromdelta'} to deduce that all lengths are constant in $t$.


\section[Triviality from vanishing of delta prime]{Triviality from the vanishing of $\delta'$}\label{sec:trivialdeformation}

\subsection{Guillemin-Kazhdan variational identity}

Let $p_t$ be as in $\S $ \ref{variationofhamiltonian} and $y_t \subset \{ p_t =1 \} $ be the closed characteristic corresponding to $\gammat$. If $L_{\gamma_t}$ is constant in $t$ (see $\S$ \ref{constantlengthfromdelta'}), then Guillemin-Kazhdan's strip argument yields
\begin{equation}\label{strip}
\int_\gamma \dot{p} ds = 0
    \end{equation}

\begin{lemma}[Guillemin-Kazhdan]\label{GKidentity}
Suppose $t\mapsto y_t$ is a $C^\infty$ family of closed orbits of the Hamiltonian flows of $p_t$ on $\{p_t=1\}$, all with the same period $L$.  Then
\[
\int_{y_0}\dot p\,ds=0,
\]
where $ds$ denotes the Hamiltonian time parameter along $y_0$.
\end{lemma}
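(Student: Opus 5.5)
We compare action functionals. Let $\alpha=\xi\,dx$ be the canonical one-form on $T^*X$, so $\omega=d\alpha$. Parametrize $y_t$ by Hamiltonian time, $y_t(s)$ with $s\in[0,L]$, so that $\dot y_t(s)=X_{p_t}(y_t(s))$. Since $p_t$ is homogeneous of degree one in $\xi$, Euler's identity gives
\[
\alpha(X_{p_t})=\xi\cdot\partial_\xi p_t=p_t=1
\]
on $\{p_t=1\}$. Hence the action is
\[
\mathcal A(t):=\int_{y_t}\alpha=\int_0^L \alpha(X_{p_t})\,ds=L,
\]
which is independent of $t$. (The same conclusion, that the action equals the length, can be read off from \eqref{eq:dotL_dotp_relation} and the constancy of $L$.) Therefore $\frac{d}{dt}\big|_0\mathcal A(t)=0$.

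Next we compute $\frac{d}{dt}\mathcal A$ by the strip argument. Let $W(s)=\partial_t y_t(s)|_{t=0}$. The family $y_t$ sweeps out a cylinder $\Sigma$ on $[0,t]\times[0,L]$, with both ends periodic because all periods are equal to $L$. Stokes' theorem, or Cartan's formula, gives
\[
\frac{d}{dt}\Big|_0\int_{y_t}\alpha=\int_0^L \omega\bigl(W,\dot y_0\bigr)\,ds,
\]
and the boundary terms cancel by periodicity in $s$. With the convention $\iota_{X_p}\omega=-dp$, this becomes $\int_0^L dp_0(W)\,ds$.

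We then differentiate the constraint $p_t(y_t(s))=1$ at $t=0$. This gives
\[
\dot p(y_0(s))+dp_0(W(s))=0,
\]
so $dp_0(W)=-\dot p$ along $y_0$. Combining the two computations,
\[
0=\frac{d}{dt}\Big|_0\mathcal A=-\int_{y_0}\dot p\,ds,
\]
which is the claim.

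The main point of care is bookkeeping rather than analysis. First, the parametrization must be by a common period $L$, so that $W$ is a genuinely periodic vector field and the strip boundary terms cancel. Second, the sign convention for $X_p$ affects only the overall sign, so it does not change the conclusion $\int_{y_0}\dot p\,ds=0$.

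Equal periods are used exactly once, in making $W$ periodic. Without that hypothesis one would obtain $\dot L=-\int\dot p\,ds$ instead, which is consistent with \eqref{eq:dotL_dotp_relation}.
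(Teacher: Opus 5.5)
Your proof is correct and is essentially the paper's argument. Both use Euler's identity to get $\int_{y_t}\alpha = L$ for all $t$, and both differentiate $p_t(y_t(s))=1$ to identify $\omega(\partial_t y_t,\partial_s y_t)$ with $\pm\dot p$ on the swept cylinder; the only difference is that the paper applies Stokes on the finite strip $[0,\varepsilon]\times(\mathbb{R}/L\mathbb{Z})$ and then differentiates in $\varepsilon$, while you differentiate the action directly at $t=0$ and drop the periodic boundary term.
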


\begin{proof}
Let $\alpha$ be the canonical $1$-form on $T^*X$ and $\omega=d\alpha$ the canonical symplectic form.  Let $H_t$ be the Hamiltonian vector field of $p_t$, characterized by
\[
\iota_{H_t}\omega = dp_t.
\]
Choose a parametrization $\Phi:[0,\varepsilon]\times(\mathbb{R}/L\mathbb{Z})\to T^*X$ of the cylinder swept out by the closed orbits,
\[
\Phi(t,s):=y_t(s),
\qquad \partial_s\Phi(t,s)=H_t(\Phi(t,s)).
\]
Since $p_t\circ \Phi\equiv 1$, differentiating in $t$ gives
\[
0=\partial_t(p_t\circ\Phi)=\dot p_t(\Phi)+dp_t(\partial_t\Phi).
\]
Using $\omega(H_t,\cdot)=dp_t(\cdot)$, we compute
\[
\omega(\partial_t\Phi,\partial_s\Phi)
=\omega(\partial_t\Phi,H_t)
=-\,\omega(H_t,\partial_t\Phi)
=-\,dp_t(\partial_t\Phi)
=\dot p_t(\Phi).
\]
Hence $\Phi^*\omega=\dot p_t(\Phi)\,dt\wedge ds$.  By Stokes' theorem,
\begin{align*}
\int_0^\varepsilon\int_0^L \dot p_t(y_t(s))\,ds\,dt
&=\int_{[0,\varepsilon]\times(\mathbb{R}/L\mathbb{Z})}\Phi^*\omega
=\int_{[0,\varepsilon]\times(\mathbb{R}/L\mathbb{Z})}d(\Phi^*\alpha)\\
&=\int_{\partial([0,\varepsilon]\times(\mathbb{R}/L\mathbb{Z}))}\Phi^*\alpha
=\int_{y_\varepsilon}\alpha-\int_{y_0}\alpha.
\end{align*}
Since $p_t$ is homogeneous of degree $1$ in $\xi$, Euler's identity gives $\alpha(H_t)=p_t$, and therefore
\[
\int_{y_t}\alpha=\int_0^L \alpha(H_t)\,ds=\int_0^L p_t\,ds=L,
\]
which is independent of $t$ by hypothesis.  Thus the right-hand side vanishes, and differentiating the left-hand side at $\varepsilon=0$ yields $\int_{y_0}\dot p\,ds=0$.
\end{proof}

\subsection[Deducing constant lengths from delta prime]{Deducing constant lengths from the $\delta'$ vanishing}\label{constantlengthfromdelta'}

Assume that $\mathrm{Tr}^{\flat}V_{g_t}=\mathrm{Tr}^{\flat}V_{g_0}$ as distributions on $(0,\infty)$ for all $t\in(-\varepsilon,\varepsilon)$.  By the exact flat trace formula \eqref{eq:flat_trace_exact}, for each fixed $t$ we have
\[
\mathrm{Tr}^{\flat}V_{g_t}(\tau)
=
\sum_{\gamma\in\mathcal{P}(g_t)}\ \sum_{m\ge1}
\frac{L^\#_\gamma(t)}{\left|\det\!\left(I-P_{\gamma,t}^{\,m}\right)\right|}\,
\delta\!\big(\tau-L_{\gamma^m}(t)\big)
\;+\;C^\infty(\tau).
\]
Every coefficient in this expansion is strictly positive, hence the singular support is exactly the set of periods of closed orbits (the length spectrum with iterates):
\[
\mathrm{sing\,supp}\big(\mathrm{Tr}^{\flat}V_{g_t}\big)=\{L_{\gamma^m}(t):\gamma\in\mathcal{P}(g_t),\ m\ge1\}.
\]
Since $\mathrm{Tr}^{\flat}V_{g_t}$ is independent of $t$, its singular support is independent of $t$ as a subset of $(0,\infty)$.  Therefore the length spectrum of $g_t$ (including iterates) is locally constant as a set.

Fix a free homotopy class $[\alpha]$.  For each $t$ there is a unique closed $g_t$-geodesic $\gamma_{[\alpha]}(t)$ in this class, and by structural stability of Anosov flows the corresponding periodic orbit persists and varies continuously with $t$.  In particular, the length function $t\mapsto L_{[\alpha]}(t)$ is continuous.  But by the previous paragraph, $L_{[\alpha]}(t)$ takes values in the fixed set $\mathrm{sing\,supp}(\mathrm{Tr}^{\flat}V_{g_0})$.  This set is discrete: for an Anosov flow the set of periods of periodic orbits is a discrete subset of $(0,\infty)$ (see, for instance, \cite[Ch.\,18]{KH}).  Hence a continuous map into this set is locally constant, and since $(-\varepsilon,\varepsilon)$ is connected it is constant on the whole interval.  Therefore
\begin{equation}\label{eq:lengths_constant}
L_{[\alpha]}(t)\equiv L_{[\alpha]}(0)\qquad\text{for all free homotopy classes }[\alpha].
\end{equation}
In particular, for every closed geodesic $\gamma$ and every $m\ge1$,
\begin{equation}\label{eq:dotL_zero}
\dot L_{\gamma^m}=0.
\end{equation}
By \eqref{eq:variation_length_iterate}, this is equivalent to the vanishing of the $h$-period integrals:
\begin{equation}\label{eq:h_period_integrals_zero}
\int_{\gamma^m} h(T,T)\,ds=0\qquad\text{for every closed geodesic }\gamma^m.
\end{equation}

\subsection{Proof of Theorem \ref{theoremA}}

\begin{proof}
Fix $t_0\in(-\varepsilon,\varepsilon)$ and set $g:=g_{t_0}$.  Let $p_t(x,\xi)=|\xi|_{g_t}$ and let $X_{g}$ denote the generator of the $g$-geodesic flow on the unit cosphere bundle $S^*_gX=\{p_{t_0}=1\}$.  By \S\ref{constantlengthfromdelta'}, every closed $g_t$-geodesic has $t$-independent length; equivalently, for each periodic orbit $y_{t}$ of the Hamiltonian flow of $p_t$ on $\{p_t=1\}$, its period is independent of $t$.

Let $\dot p:=\partial_t p_t|_{t=t_0}$, viewed as a smooth function on $S^*_gX$.  Applying Lemma~\ref{GKidentity} to the shifted family $t\mapsto p_{t_0+t}$ shows that for every closed orbit $y_0$ of $X_g$,
\begin{equation}\label{eq:livsic_obstruction}
\int_{y_0}\dot p\,ds=0.
\end{equation}
Since $g$ has negative curvature, the flow of $X_g$ is Anosov.  Therefore Liv\v{s}ic's theorem applies: the vanishing of all periodic orbit integrals \eqref{eq:livsic_obstruction} implies that $\dot p$ is a coboundary, i.e.\ there exists a (H\"older) function $u$ on $S^*_gX$ such that
\begin{equation}\label{eq:cohomological_equation}
X_g u=\dot p
\end{equation}
(for general Anosov flows, smooth regularity of the Liv\v{s}ic coboundary follows from \cite{LMM} and \cite{Journe}. In our setting, smoothness follows independently from the $SO(2)$-mode reduction below).

Now, $\dot p$ is even under the flip $\xi\mapsto-\xi$ and has $SO(2)$-Fourier support in modes $0,\pm2$ (cf.\ \eqref{eq:dot_p_unit}).  The Guillemin-Kazhdan $SO(2)$-calculus on the coframe bundle (Appendix~\ref{sec:gkcalculus}, cf.\ \cite{GK}) upgrades the Liv\v{s}ic solution: one can choose $u\in C^\infty(S^*_gX)$ solving \eqref{eq:cohomological_equation}, and moreover $u$ is fiber-linear (its Fourier support is contained in modes $\pm1$).  Concretely, there is a smooth vector field $v$ on $X$ such that
\begin{equation}\label{eq:u_fiber_linear_main}
u(x,\xi)=\langle \xi, v(x)\rangle_g\qquad\text{for }(x,\xi)\in S^*_gX.
\end{equation}

To convert \eqref{eq:cohomological_equation} into a statement about the metric variation, identify $\xi\in S^*_gX$ with its dual unit tangent vector $T:=\xi^\sharp\in S_gX$.  Then $u=g(v,T)$.  Along any $g$-geodesic, $\nabla_TT=0$, and therefore
\[
X_g u
=
\frac{d}{ds}g(v,T)
=
g(\nabla_Tv,T)
=
\frac12(\mathcal{L}_v g)(T,T).
\]
On the other hand, \eqref{eq:dot_p_unit} at time $t=t_0$ reads $\dot p=-\frac12(\partial_t g_t|_{t=t_0})(T,T)$.  Comparing with \eqref{eq:cohomological_equation} gives
\[
(\mathcal{L}_v g)(T,T)=-(\partial_t g_t|_{t=t_0})(T,T)\qquad\text{for all }T\in S_gX,
\]
hence
\begin{equation}\label{eq:metric_variation_is_lie}
\partial_t g_t\big|_{t=t_0}=-\,\mathcal{L}_v g_{t_0}.
\end{equation}

Applying the same argument at each $t_0$ produces a smooth time-dependent vector field $v_t$ on $X$ such that $\partial_t g_t=-\mathcal{L}_{v_t}g_t$ for all $t$ in a possibly smaller interval.  Let $\phi_t$ be the flow of $v_t$, i.e.\ the unique solution of
\[
\frac{d}{dt}\phi_t = v_t\circ\phi_t,\qquad \phi_0=\mathrm{id}.
\]
Then
\[
\frac{d}{dt}\big(\phi_t^*g_t\big)
=
\phi_t^*\big(\partial_t g_t+\mathcal{L}_{v_t}g_t\big)
=
0,
\]
so $\phi_t^*g_t=g_0$ for all $t$.  Setting $\varphi_t:=\phi_t^{-1}$ we obtain $g_t=\varphi_t^*g_0$, as claimed.
\end{proof}

\section{Appendix}


\subsection[Lower order terms]{Explicit computation of the lower order terms in \eqref{flattracemicrolocal}}

\label{sec:lowertermscomputation}

We work at a fixed metric $g$ of negative curvature.  Recall the definition of the flat trace (cf.\ \eqref{flattraceintime} with $t$ fixed):
\begin{equation}\label{eq:flat_trace_def_app}
\mathrm{Tr}^\flat V(\tau)
=\int_{S^*_gX}\delta\!\big(\zeta-\Gtau\zeta\big)\,d\mu_L(\zeta),
\qquad \tau>0,
\end{equation}
where $\delta(\zeta-\Gtau\zeta)$ is the Dirac distribution on the fixed point set of $\Gtau$ (interpreted via the pullback to the diagonal in the sense of clean intersection).

Fix a primitive closed geodesic $\gamma$ of prime length $\Lsharp$, and its $m$-iterate $\gamma^m$ of length $L_{\gamma^m}=m\Lsharp$.  Choose a flow box $U$ around $\gamma$ and a smooth diffeomorphism
\begin{equation}\label{eq:flow_box}
\Psi:\ \mathbb{S}^1_{\Lsharp}\times B_\rho(0)\subset \mathbb{S}^1_{\Lsharp}\times\mathbb{R}^2 \longrightarrow U\subset S^*_gX,
\qquad (s,y)\longmapsto \Psi(s,y),
\end{equation}
such that $\Psi(s,0)\in\gamma$ parametrizes $\gamma$ by arclength $s$ and the geodesic flow has the exact normal form
\begin{equation}\label{eq:flow_box_dynamics}
\Gtau\big(\Psi(s,y)\big)=\Psi\big(s+\tau,\ \Phi_\tau(y)\big),
\end{equation}
for a smooth family of local diffeomorphisms $\Phi_\tau$ of $B_\rho(0)$ with $\Phi_{0}=\mathrm{id}$ and
\begin{equation}\label{eq:poincare}
D\Phi_{m\Lsharp}(0)=P_\gamma^{\,m}.
\end{equation}
Let $\chi\in C_c^\infty(U)$ be a cutoff which equals $1$ in a smaller neighborhood of $\gamma$ and set $\chi^\Psi:=\chi\circ\Psi$.  The localized flat trace is the distribution
\begin{equation}\label{eq:Tgamma_def}
\mathrm{Tr}^\flat V_\gamma(\tau):=\int_{S^*_gX}\chi(\zeta)\,\delta\!\big(\zeta-\Gtau\zeta\big)\,d\mu_L(\zeta),
\qquad \mathrm{Tr}^\flat V=\sum_\gamma \mathrm{Tr}^\flat V_\gamma + C^\infty(\tau).
\end{equation}

\smallskip

\noindent
\textbf{Reduction to a $\delta$-calculus on $\mathbb{S}^1_{\Lsharp}\times\mathbb{R}^2$.}
Write the Liouville density in flow box coordinates as
\begin{equation}\label{eq:mu_pullback}
\Psi^*(d\mu_L)=\rho(s,y)\,ds\,dy,
\qquad \rho\in C^\infty,\ \rho>0.
\end{equation}
Since $d\mu_L$ restricts to arclength on the closed orbit $\gamma$ and $s$ is arclength, we may (and do) normalize $\Psi$ so that
\begin{equation}\label{eq:rho_normalization}
\rho(s,0)\equiv 1\quad\text{for all }s\in\mathbb{S}^1_{\Lsharp}.
\end{equation}
Pair $\mathrm{Tr}^\flat V_\gamma$ against an arbitrary test function $\varphi\in C_c^\infty((0,\infty))$.  By definition,
\begin{align}
\langle \mathrm{Tr}^\flat V_\gamma,\varphi\rangle
&=\int_0^\infty \varphi(\tau)\int_{S^*_gX}\chi(\zeta)\,\delta\!\big(\zeta-\Gtau\zeta\big)\,d\mu_L(\zeta)\,d\tau \notag\\
&=\int_0^\infty\!\!\int_{\mathbb{S}^1_{\Lsharp}}\!\!\int_{\mathbb{R}^2}
\varphi(\tau)\,\chi^\Psi(s,y)\,\delta\!\Big(\Psi(s,y)-\Psi(s+\tau,\Phi_\tau(y))\Big)\,\rho(s,y)\,dy\,ds\,d\tau.
\label{eq:pairing_start}
\end{align}
Because $\Psi$ is a diffeomorphism, the distribution $\delta(\Psi(s,y)-\Psi(s',y'))$ is the pullback of the delta on the diagonal under $(\Psi\times\Psi)$, and the Jacobian factors cancel against $\Psi^*(d\mu_L)$ in \eqref{eq:pairing_start}.  Consequently, in the variables $(s,y)$ the delta constraint is exactly the simultaneous constraint
\begin{equation}\label{eq:delta_constraints}
s+\tau\equiv s \ \ (\mathrm{mod}\ \Lsharp),\qquad \Phi_\tau(y)=y,
\end{equation}
and we may rewrite \eqref{eq:pairing_start} as the $\delta$-pairing for the map
\(
F(s,y,\tau):=(s-(s+\tau),\,y-\Phi_\tau(y))
\)
on $\mathbb{S}^1_{\Lsharp}\times\mathbb{R}^2\times\mathbb{R}$:
\begin{equation}\label{eq:pairing_deltaF}
\langle \mathrm{Tr}^\flat V_\gamma,\varphi\rangle
=
\int_0^\infty\!\!\int_{\mathbb{S}^1_{\Lsharp}}\!\!\int_{\mathbb{R}^2}
\varphi(\tau)\,\chi^\Psi(s,y)\,\rho(s,y)\,
\delta_{\mathbb{S}^1_{\Lsharp}}\!\big(s-(s+\tau)\big)\,
\delta_{\mathbb{R}^2}\!\big(y-\Phi_\tau(y)\big)\,dy\,ds\,d\tau.
\end{equation}

\smallskip

\noindent
\textbf{The circle delta and the Dirac comb in $\tau$.}
Let $\delta_{\mathbb{S}^1_{\Lsharp}}$ denote the delta distribution on the circle of length $\Lsharp$.  Its pullback to $\mathbb{R}$ is the Dirac comb
\begin{equation}\label{eq:dirac_comb}
\delta_{\mathbb{S}^1_{\Lsharp}}(u)=\sum_{k\in\mathbb{Z}}\delta(u-k\Lsharp)\qquad\text{in }\mathcal{D}'(\mathbb{R}).
\end{equation}
We have
\begin{equation}\label{eq:circle_integral}
\int_{\mathbb{S}^1_{\Lsharp}}\delta_{\mathbb{S}^1_{\Lsharp}}\!\big(s-(s+\tau)\big)\,ds
=\int_{\mathbb{S}^1_{\Lsharp}}\delta_{\mathbb{S}^1_{\Lsharp}}(-\tau)\,ds
=\Lsharp\sum_{k\in\mathbb{Z}}\delta(\tau-k\Lsharp).
\end{equation}
Substituting \eqref{eq:circle_integral} into \eqref{eq:pairing_deltaF} reduces the pairing to
\begin{equation}\label{eq:pairing_after_s}
\langle \mathrm{Tr}^\flat V_\gamma,\varphi\rangle
=
\int_0^\infty \varphi(\tau)\,
\Big(\Lsharp\sum_{k\in\mathbb{Z}}\delta(\tau-k\Lsharp)\Big)\,
\Big[\int_{\mathbb{R}^2} A(\tau,y)\,\delta\!\big(y-\Phi_\tau(y)\big)\,dy\Big]\,d\tau,
\end{equation}
where
\begin{equation}\label{eq:A_def}
A(\tau,y):=\int_{\mathbb{S}^1_{\Lsharp}}\chi^\Psi(s,y)\,\rho(s,y)\,ds.
\end{equation}
Because $\chi^\Psi\equiv 1$ near $y=0$ and \eqref{eq:rho_normalization} holds, we have
\begin{equation}\label{eq:A_at_0}
A(\tau,0)=\int_{\mathbb{S}^1_{\Lsharp}}1\,ds=\Lsharp
\qquad\text{for $\tau$ in a neighborhood of $m\Lsharp$.}
\end{equation}

\smallskip

\noindent
\textbf{The transversal delta and the Poincar\'e determinant.}
Fix $m\ge 1$ and localize in $\tau$ to a small neighborhood of $m\Lsharp$ so that $\gamma^m$ is the unique fixed orbit in the support of $\chi$.  Set
\begin{equation}\label{eq:Ftau_def}
F_\tau(y):=y-\Phi_\tau(y).
\end{equation}
At $\tau=m\Lsharp$, $F_{m\Lsharp}(0)=0$ and, by \eqref{eq:poincare},
\begin{equation}\label{eq:DF}
DF_{m\Lsharp}(0)=I-D\Phi_{m\Lsharp}(0)=I-P_\gamma^{\,m}.
\end{equation}
Negative curvature implies $\gamma$ is hyperbolic, hence $1$ is not an eigenvalue of $P_\gamma^{\,m}$ and $\det(I-P_\gamma^{\,m})\neq 0$.  By the inverse function theorem, $F_{m\Lsharp}$ is a diffeomorphism from a neighborhood of $0$ onto a neighborhood of $0$, and the distribution $\delta(F_{m\Lsharp}(y))$ is computed by a change of variables: for any $\psi\in C_c^\infty(\mathbb{R}^2)$ supported sufficiently close to $0$,
\begin{align}
\int_{\mathbb{R}^2}\psi(y)\,\delta\!\big(F_{m\Lsharp}(y)\big)\,dy
&=\int_{\mathbb{R}^2}\psi\big(F_{m\Lsharp}^{-1}(z)\big)\,\delta(z)\,
\big|\det DF_{m\Lsharp}(F_{m\Lsharp}^{-1}(z))\big|^{-1}\,dz \notag\\
&=\psi(0)\,\big|\det DF_{m\Lsharp}(0)\big|^{-1}
=\psi(0)\,\big|\det(I-P_\gamma^{\,m})\big|^{-1}.
\label{eq:delta_change_of_vars}
\end{align}
Applying \eqref{eq:delta_change_of_vars} to the inner bracket in \eqref{eq:pairing_after_s} with $\psi(y)=A(\tau,y)$ and using \eqref{eq:A_at_0} yields, near $\tau=m\Lsharp$,
\begin{equation}\label{eq:transversal_eval}
\int_{\mathbb{R}^2}A(\tau,y)\,\delta\!\big(y-\Phi_{m\Lsharp}(y)\big)\,dy
=\frac{A(m\Lsharp,0)}{|\det(I-P_\gamma^{\,m})|}
=\frac{\Lsharp}{|\det(I-P_\gamma^{\,m})|}.
\end{equation}
Substituting \eqref{eq:transversal_eval} into \eqref{eq:pairing_after_s} and restricting to $\tau>0$ gives
\begin{equation}\label{eq:Tgamma_pairing_final}
\langle \mathrm{Tr}^\flat V_\gamma, \, \varphi\rangle
=\sum_{m\ge 1}\frac{\Lsharp}{|\det(I-P_\gamma^{\,m})|}\,\varphi(m\Lsharp),
\end{equation}
hence, as distributions on $(0,\infty)$,
\begin{equation}\label{eq:Tgamma_distribution}
\mathrm{Tr}^\flat V_\gamma(\tau)=\sum_{m\ge 1}\frac{\Lsharp}{|\det(I-P_\gamma^{\,m})|}\,\delta(\tau-m\Lsharp)
\quad\text{microlocally near }\tau=m\Lsharp.
\end{equation}

\smallskip

\noindent
\textbf{Conclusion.}
Summing \eqref{eq:Tgamma_distribution} over all primitive closed geodesics and adding the $C^\infty$ contribution from $\tau$ away from the length spectrum yields the sharp Lefschetz flat-trace formula
\begin{equation}\label{eq:flat_trace_no_lower}
\mathrm{Tr}^\flat V(\tau)
=
\sum_{\gamma}\sum_{m\ge 1}\frac{\Lsharp}{|\det(I-P_\gamma^{\,m})|}\,\delta(\tau-L_{\gamma^m})
\;+\;C^\infty(\tau).
\end{equation}
In particular, the parenthetical term ``(lower order terms at $\tau=L_{\gamma^m}$)'' in \eqref{flattracemicrolocal} vanishes identically in the Lefschetz (hyperbolic) setting: there are no additional singular contributions supported at $\tau=L_{\gamma^m}$ beyond the Dirac masses already displayed.


\subsection[SO(2) calculus and mode reduction]{The $SO(2)$ Fourier calculus on $S^*_gX$ and the $\pm 1$ mode reduction}\label{sec:gkcalculus}

Let $(X,g)$ be a closed oriented surface with Gaussian curvature $K<0$, and identify $S^*_gX\simeq SX$ via the musical isomorphism.  Denote by $\pi:SX\to X$ the bundle projection.  We use the standard canonical coframing on $SX$:
there exist smooth $1$-forms $(\alpha,\beta,\psi)$ and smooth vector fields $(X,X_\perp,V)$ on $SX$ uniquely specified by
\begin{align}
    &\alpha(X)=1,  \ \beta(X)=0,\ \psi(X)=0;\quad
\alpha(X_\perp)=0,\ \beta(X_\perp)=1,\ \psi(X_\perp)=0; \notag \\
    &\alpha(V)=\beta(V)=0,  \psi(V)=1,     \label{eq:dual_frame}
\end{align}
together with the structure equations
\begin{equation}\label{eq:structure_eqs}
d\alpha=\psi\wedge\beta,\qquad
d\beta=-\psi\wedge\alpha,\qquad
d\psi=K\,\alpha\wedge\beta.
\end{equation}
The Liouville volume form is
\begin{equation}\label{eq:liouville_volume}
d\mu=\alpha\wedge d\alpha=\alpha\wedge\psi\wedge\beta.
\end{equation}
The geodesic flow generator is $X$ (the Reeb field of $\alpha$), $V$ generates the right $SO(2)$-action (rotation of the fiber angle), and $  X_\perp =   [V,X]$ is the horizontal rotation by $\pi/2$.

\smallskip

\noindent\textbf{Commutators.}
For any $1$-form $\omega$ and vector fields $Y,Z$,
\begin{equation}\label{eq:domega_comm}
d\omega(Y,Z)=Y(\omega(Z))-Z(\omega(Y))-\omega([Y,Z]).
\end{equation}
Applying \eqref{eq:domega_comm} to $\omega=\alpha,\beta,\psi$ and the pairs $(X,V)$, $(X_\perp,V)$, $(X,X_\perp)$, using \eqref{eq:dual_frame}-\eqref{eq:structure_eqs}, gives:
\begin{align*}
d\alpha(X,V)&=(\psi\wedge\beta)(X,V)=\psi(X)\beta(V)-\psi(V)\beta(X)=0-1\cdot 0=0
\ \Rightarrow\ \alpha([X,V])=0,\\
d\beta(X,V)&=(-\psi\wedge\alpha)(X,V)=-(\psi(X)\alpha(V)-\psi(V)\alpha(X))=-(0-1\cdot 1)=1
\ \Rightarrow\ \beta([X,V])= -1,\\
d\psi(X,V)&=(K\alpha\wedge\beta)(X,V)=K(\alpha(X)\beta(V)-\alpha(V)\beta(X))=0
\ \Rightarrow\ \psi([X,V])=0,
\end{align*}
hence $[X,V]= -X_\perp$.  Similarly, evaluating at $(X_\perp,V)$ yields $[V,X_\perp]=-X$.  Finally, at $(X,X_\perp)$,
\[
d\psi(X,X_\perp)=K\alpha\wedge\beta(X,X_\perp)=K,
\qquad
d\psi(X,X_\perp)=-\psi([X,X_\perp]),
\]
so $\psi([X,X_\perp])=-K$, and since $\alpha([X,X_\perp])=\beta([X,X_\perp])=0$ (from $d\alpha(X,X_\perp)=d\beta(X,X_\perp)=0$), we obtain
\begin{equation}\label{eq:commutators}
[V,X]=X_\perp,\qquad [V,X_\perp]=-X,\qquad [X,X_\perp]=-K\,V.
\end{equation}

\smallskip

\noindent\textbf{Skew-adjointness.}
We claim that $X,X_\perp,V$ are divergence-free with respect to $d\mu$, hence skew-adjoint on $L^2(SX,d\mu)$.  By Cartan's formula $L_Y\omega=i_Y d\omega+d(i_Y\omega)$ and \eqref{eq:structure_eqs}, we have
\[
L_X\alpha=i_X(\psi\wedge\beta)+d(\alpha(X))=\psi(X)\beta-\beta(X)\psi+0=0,
\qquad L_X d\alpha=d(L_X\alpha)=0,
\]
thus $L_X(\alpha\wedge d\alpha)=0$.  Next,
\[
L_{X_\perp}\alpha=i_{X_\perp}(\psi\wedge\beta)=\psi(X_\perp)\beta-\beta(X_\perp)\psi=-\psi,
\qquad
L_{X_\perp}d\alpha=d(-\psi)=-d\psi=-K\,\alpha\wedge\beta,
\]
and therefore
\[
L_{X_\perp}(\alpha\wedge d\alpha)=(-\psi)\wedge d\alpha+\alpha\wedge(-K\alpha\wedge\beta)=0.
\]
Finally,
\[
L_V\alpha=i_V(\psi\wedge\beta)=\psi(V)\beta-\beta(V)\psi=\beta,\qquad
L_Vd\alpha=d\beta=-\psi\wedge\alpha,
\]
so
\[
L_V(\alpha\wedge d\alpha)=\beta\wedge d\alpha+\alpha\wedge(-\psi\wedge\alpha)=0.
\]
Hence $L_Xd\mu=L_{X_\perp}d\mu=L_Vd\mu=0$, and by integration by parts,
\begin{equation}\label{eq:skew_adjoint}
\langle Xu,v\rangle=-\langle u,Xv\rangle,\qquad
\langle X_\perp u,v\rangle=-\langle u,X_\perp v\rangle,\qquad
\langle Vu,v\rangle=-\langle u,Vv\rangle,
\end{equation}
for all smooth $u,v$ (and by density for all $H^1$ functions).

\smallskip

\noindent\textbf{Fourier decomposition and the operators $\eta^\pm$.}
Let $V$ be the infinitesimal generator of the right $SO(2)$-action; in a fiber angle coordinate $\theta$ one has $V=\partial_\theta$, hence $V$ is skew-adjoint and its spectrum is $\{im:\ m\in\mathbb{Z}\}$.  Define
\[
\mathcal{H}_m:=\{u\in L^2(SX):\ Vu=imu\ \text{in }\mathcal{D}'(SX)\},
\qquad
L^2(SX)=\widehat{\bigoplus}_{m\in\mathbb{Z}}\mathcal{H}_m,
\qquad u=\sum_{m\in\mathbb{Z}}u_m.
\]
Introduce the raising/lowering operators
\begin{equation}\label{eq:eta_pm_def}
\eta^+:=\tfrac12(X-iX_\perp),\qquad \eta^-:=\tfrac12(X+iX_\perp),
\qquad X=\eta^++\eta^-.
\end{equation}
By \eqref{eq:skew_adjoint}, $(\eta^+)^*=-\eta^-$ and $(\eta^-)^*=-\eta^+$.  Moreover, by \eqref{eq:commutators},
\begin{equation}\label{eq:eta_comm}
[\eta^-,\eta^+]=-\tfrac{i}{2}[X,X_\perp]=-\tfrac{i}{2}(-K V)=\tfrac{i}{2}\,K\,V,
\qquad
[V,\eta^\pm]=\pm i\,\eta^\pm,
\end{equation}
so $\eta^\pm:\mathcal{H}_m\to \mathcal{H}_{m\pm 1}$.

\smallskip

\noindent\textbf{Parity reduction for even data.}
Let $\mathcal{A}:SX\to SX$ be the flip $\mathcal{A}(x,v)=(x,-v)$.  Then $\mathcal{A}$ conjugates the flow to its inverse:
\begin{equation}\label{eq:flip_conjugacy}
\Gtau\circ \mathcal{A}=\mathcal{A}\circ \Gtau[-1],\qquad \tau\in\mathbb{R},
\end{equation}
hence $\mathcal{A}_*X=-X$, while $\mathcal{A}$ is a fiber rotation by $\pi$ and therefore commutes with $V$.  In particular, if $f$ is even under the flip, $f\circ\mathcal{A}=f$, and $Xu=f$, then
\[
X(u\circ\mathcal{A})=(\mathcal{A}_*X)u\circ\mathcal{A}=-(Xu)\circ\mathcal{A}=-f,
\]
so $X(u+u\circ\mathcal{A})=0$.  Since the geodesic flow on a negatively curved surface is ergodic with respect to $d\mu$, any $L^2$ function annihilated by $X$ is constant a.e.; for continuous functions this implies global constancy.  Subtracting half this constant from $u$, we may assume
\begin{equation}\label{eq:u_odd}
u\circ\mathcal{A}=-u,
\end{equation}
i.e.\ $u(x,\theta+\pi)=-u(x,\theta)$ in an angle coordinate.  Consequently, all even Fourier modes vanish:
\begin{equation}\label{eq:odd_modes}
u_m=0\quad\text{for every even }m.
\end{equation}

\smallskip

\noindent\textbf{Mode-by-mode form of the Liv\v{s}ic equation.}
Let $u\in C^1(SX)$ solve
\begin{equation}\label{eq:livsic_eq}
Xu=f,
\end{equation}
with $f\in C^\infty(SX)$.  Decomposing $u=\sum u_m$ and $f=\sum f_m$ and using $X=\eta^++\eta^-$ and $\eta^\pm:\mathcal{H}_m\to\mathcal{H}_{m\pm1}$ gives, by projection onto $\mathcal{H}_m$,
\begin{equation}\label{eq:mode_equation}
f_m=\eta^+u_{m-1}+\eta^-u_{m+1},\qquad m\in\mathbb{Z}.
\end{equation}
In our application $f=\dotp|_{SX}=-\tfrac12 h(v,v)$, where $h=\dot g$ is a smooth symmetric $2$-tensor on $X$ and
$v\in S_xX$ is the unit direction.  To determine the $SO(2)$-Fourier support of $\dot p$, fix a local oriented
$g$-orthonormal frame $(e_1,e_2)$ on $X$ and write the fiber angle coordinate $\theta$ so that
\[
v=\cos\theta\,e_1+\sin\theta\,e_2 .
\]
Set $h_{ij}(x):=h_x(e_i,e_j)$.
Then, using $\cos^2\theta=\frac12(1+\cos2\theta)$, $\sin^2\theta=\frac12(1-\cos2\theta)$, and
$\sin\theta\cos\theta=\frac12\sin2\theta$, we compute
\begin{align*}
h(v,v)
&=h_{11}\cos^2\theta+2h_{12}\sin\theta\cos\theta+h_{22}\sin^2\theta\\
&=\tfrac12(h_{11}+h_{22})
+\tfrac12(h_{11}-h_{22})\cos2\theta
+h_{12}\sin2\theta.
\end{align*}
Equivalently, writing $\cos2\theta=\frac12(e^{2i\theta}+e^{-2i\theta})$ and
$\sin2\theta=\frac{1}{2i}(e^{2i\theta}-e^{-2i\theta})$, we obtain
\begin{equation}\label{eq:dotp_modes}
\dot p(x,\theta)
= -\frac14\,(h_{11}+h_{22})(x)
-\frac18\Big((h_{11}-h_{22})(x)-2i\,h_{12}(x)\Big)e^{2i\theta}
-\frac18\Big((h_{11}-h_{22})(x)+2i\,h_{12}(x)\Big)e^{-2i\theta}.
\end{equation}
In particular,
\[
\dot p\in \mathcal{H}_{-2}\oplus\mathcal{H}_0\oplus\mathcal{H}_2,
\]
and $\dot p$ is even under the flip $\mathcal{A}(x,v)=(x,-v)$.

\smallskip

\noindent\textbf{Energy-curvature identity.}
For $w\in C^\infty(SX)$, using $(\eta^\pm)^*=-\eta^\mp$ and \eqref{eq:eta_comm},
\begin{align}
\|\eta^+w\|_{L^2}^2-\|\eta^-w\|_{L^2}^2
&=\langle w,(\eta^+)^*\eta^+w\rangle-\langle w,(\eta^-)^*\eta^-w\rangle \notag\\
&=\langle w,-\eta^-\eta^+w\rangle-\langle w,-\eta^+\eta^-w\rangle
=\langle w,[\eta^+,\eta^-]w\rangle
=-\langle w,[\eta^-,\eta^+]w\rangle \notag\\
&=-\Big\langle w,\tfrac{i}{2}K Vw\Big\rangle
=-\tfrac{i}{2}\int_{SX}K\,(Vw)\,\overline{w}\,d\mu.
\label{eq:energy_identity}
\end{align}
If $w\in\mathcal{H}_m$ so that $Vw=imw$, then \eqref{eq:energy_identity} becomes
\begin{equation}\label{eq:energy_identity_mode}
\|\eta^+w\|^2-\|\eta^-w\|^2=\frac{m}{2}\int_{SX}K\,|w|^2\,d\mu.
\end{equation}
Since $K\le -\kappa_0<0$, for $m>0$ we obtain the coercive estimate
\begin{equation}\label{eq:coercive_estimate_correct}
\|\eta^+w\|^2 \le \|\eta^-w\|^2 - \frac{\kappa_0\,m}{2}\,\|w\|^2.
\end{equation}

\smallskip

\noindent\textbf{The $\pm 1$-mode reduction.}
Assume $u\in C^1(SX)$ satisfies $Xu=f$ with $f$ even and supported in modes $0,\pm 2$, and normalize $u$ to be odd as in \eqref{eq:u_odd}, so $u$ has only odd modes.  Then $f_m=0$ for all odd $m$ and \eqref{eq:mode_equation} yields, for all $|m|\ge 3$,
\begin{equation}\label{eq:tail_relation}
\eta^+u_{m-1}+\eta^-u_{m+1}=0.
\end{equation}
Taking $L^2$ norms and using orthogonality of Fourier modes implies
\begin{equation}\label{eq:norm_relation}
\|\eta^-u_{m+1}\|=\|\eta^+u_{m-1}\|\qquad (|m|\ge 3),
\end{equation}
and therefore, for $m\ge 3$,
\begin{equation}\label{eq:diff2_app}
\|\eta^+u_m\|^2-\|\eta^-u_m\|^2=\|\eta^-u_{m+2}\|^2-\|\eta^+u_{m-2}\|^2.
\end{equation}
Summing \eqref{eq:diff2_app} for $m=3,4,\dots,M$ gives the telescoping identity
\begin{equation}\label{eq:telescoping_app}
\sum_{m=3}^{M}\big(\|\eta^+u_m\|^2-\|\eta^-u_m\|^2\big)
=\|\eta^-u_{M+2}\|^2+\|\eta^-u_{M+1}\|^2-\|\eta^+u_1\|^2-\|\eta^+u_2\|^2.
\end{equation}
By \eqref{eq:energy_identity_mode} and $K\le -\kappa_0$,
\begin{equation}\label{eq:telescoping_bound_app}
\sum_{m=3}^{M}\big(\|\eta^+u_m\|^2-\|\eta^-u_m\|^2\big)
=\sum_{m=3}^{M}\frac{m}{2}\int K|u_m|^2\,d\mu
\le -\frac{\kappa_0}{2}\sum_{m=3}^{M}m\,\|u_m\|^2.
\end{equation}
Since $\eta^-u\in L^2(SX)$ and $\eta^-u=\sum_m \eta^-u_m$ with orthogonal summands (because $\eta^-u_m\in\mathcal{H}_{m-1}$), we have $\|\eta^-u_m\|\to 0$ as $|m|\to\infty$, hence letting $M\to\infty$ in \eqref{eq:telescoping_app}-\eqref{eq:telescoping_bound_app} yields
\[
-\|\eta^+u_1\|^2-\|\eta^+u_2\|^2
\le -\frac{\kappa_0}{2}\sum_{m\ge 3}m\,\|u_m\|^2.
\]
Both sides are $\le 0$, so necessarily $\sum_{m\ge 3}m\,\|u_m\|^2=0$, hence $u_m\equiv 0$ for all $m\ge 3$.  Applying the same argument to the negative tail (using \eqref{eq:tail_relation} for $m\le -3$) gives $u_m\equiv 0$ for all $m\le -3$.  Together with \eqref{eq:odd_modes} we conclude
\begin{equation}\label{eq:mode_reduction_final}
u=u_{-1}+u_1,\qquad u_{\pm 1}\in\mathcal{H}_{\pm 1}.
\end{equation}

\smallskip

\noindent\textbf{Elliptic regularity and fiber-linearity.}
With \eqref{eq:mode_reduction_final}, the $m=\pm 2$ components of \eqref{eq:mode_equation} read
\begin{equation}\label{eq:solve_u1}
f_2=\eta^+u_1,\qquad f_{-2}=\eta^-u_{-1}.
\end{equation}
The operators $\eta^+:\mathcal{H}_1\to\mathcal{H}_2$ and $\eta^-:\mathcal{H}_{-1}\to\mathcal{H}_{-2}$ are first-order elliptic (their principal symbols are nonvanishing complex combinations of the horizontal symbols of $X$ and $X_\perp$), hence by elliptic regularity $f_{\pm2}\in C^\infty$ implies $u_{\pm1}\in C^\infty$.  Finally, $\mathcal{H}_{\pm1}$ consists precisely of fiber-linear functions: in a local oriented orthonormal frame $(e_1,e_2)$ and fiber angle $\theta$,
\[
u_1(x,\theta)=a(x)e^{i\theta},\qquad u_{-1}(x,\theta)=\overline{a(x)}e^{-i\theta},
\]
so $u=u_{-1}+u_1=v^1(x)\cos\theta+v^2(x)\sin\theta$ with a unique smooth vector field $v=v^1e_1+v^2e_2$ on $X$.  Equivalently,
\begin{equation}\label{eq:u_fiber_linear}
u(x,\xi)=\langle \xi,\ v(x)\rangle_g\qquad\text{on }S^*_gX.
\end{equation}
This is the $\pm 1$-mode reduction used in \eqref{eq:metric_variation_is_lie}.


\end{document}